\def\@settitle{%
  \baselineskip14\p@\relax
    {\Large\bfseries
  \@title}}
\def\@setauthors{%
  \begingroup
  \def\thanks{\protect\thanks@warning}%
  \trivlist
  \footnotesize \@topsep45\p@\relax
  \advance\@topsep by -\baselineskip
  \item\relax
  \author@andify\authors
  \def\\{\protect\linebreak}%
  {\sc\fontsize{12}{10}\selectfont\authors}%
  \ifx\@empty\contribs
  \else
    ,\penalty-3 \space \@setcontribs
    \@closetoccontribs
  \fi
  \endtrivlist
  \endgroup
}
\def\@secnumfont{\bfseries}%
\def\section{\@startsection{section}{1}%
  \z@{.7\linespacing\@plus\linespacing}{.5\linespacing}%
  {\normalfont\bf}}
\newcommand{\arxiv}[1]{\tt arxiv:\hspace{0pt}{\href{http://arxiv.org/abs/#1}{#1}}}
\renewcommand{\voltext}{\IfEmptyBibField{series}{\textbf}{\textbf}}
\renewcommand{\BibLabel}{%
    \Hy@raisedlink{\hyper@anchorstart{cite.\CurrentBib}\hyper@anchorend}%
    [\thebib]}
\numberwithin{equation}{section}\swapnumbers
\newcommand{\cxymatrix}[1]{\vcenter{\xymatrix@=15pt{#1}}}
\newcommand{\kxymatrix}[1]{\vcenter{\xymatrix@=5pt{#1}}}
\renewcommand{\rightleftarrows}[4]%
{\xymatrix@=10pt@M=0pt{#1\,\ar@<2pt>[r]^>>{#2}&\,#4\ar@<2pt>[l]^<<{#3}}}
\newcommand{\xysubseteq}{\ar@{}[r]|{\displaystyle\subseteq}}
\newcommand{\xysubseteqdown}{\ar@{}[d]|{\rotatebox{90}{$\supseteq$}}}
\newtheorem{theorem}{Theorem}[section]
\newaliascnt{lemma}{theorem}
\newtheorem{lemma}[lemma]{Lemma}
\newaliascnt{corollary}{theorem}
\newtheorem{corollary}[corollary]{Corollary}
\newaliascnt{proposition}{theorem}
\newtheorem{proposition}[proposition]{Proposition}
\newaliascnt{conjecture}{theorem}
\theoremstyle{definition}
\newaliascnt{definition}{theorem}
\newaliascnt{remark}{theorem}
\newtheorem{remark}[remark]{Remark}
\newtheorem{remarks}[remark]{Remarks}
\newtheorem*{remark*}{Remark}
\newaliascnt{example}{theorem}
\setlist[enumerate,2]{label=\textit{\alph*)},ref=\textit{\alph*}),noitemsep}
\setlist[enumerate,1]{label=\textit{\roman*)},ref=\textit{\roman*}),noitemsep}
\renewcommand\[{\begin{equation}}
\renewcommand\]{\end{equation}}
\newcommand\G{{\bf G}}
\renewcommand\phi{\varphi}
\renewcommand\epsilon{\varepsilon}
\renewcommand\theta{\vartheta}
\renewcommand\rho{\varrho}
\newcommand\QQ{{\mathbb Q}}
\newcommand\ZZ{{\mathbb Z}}
\newcommand\cD{{\mathcal D}}
\newcommand\cO{{\mathcal O}}
\newcommand\Bq{{\overline{B}}}
\newcommand\Dq{{\overline{D}}}
\newcommand\eq{{\overline{e}}}
\newcommand\Gq{{\overline{G}}}
\newcommand\Hq{{\overline{H}}}
\renewcommand\qq{{\overline{q}}}
\newcommand\Tq{{\overline{T}}}
\newcommand\xq{{\overline{x}}}\newcommand\Xq{{\overline{X}}}
\newcommand{\into}{\hookrightarrow}
\newcommand\<{\langle}
\renewcommand\>{\rangle}
\def\|#1|{\operatorname{#1}}
\title[]{On the fundamental group of a spherical variety}
\author[]{Friedrich Knop}
\address[]{Dept. Mathematik\\FAU Erlangen-Nürnberg\\
  Cauerstraße 11\\
  D-91058 Erlangen}
\begin{document}

\begin{abstract}
  Let $X=G/H$ be a spherical variety over an algebraically closed
  field of characteristic $p\ge0$. We compute the $p'$-parts
  of $\pi_0(H)$ and $\pi_1(X)$ from the spherical system of $X$.
\end{abstract}

\maketitle         

\section{Introduction}

This paper originated from a question by Spencer Leslie \cite{Leslie}
on MathOverflow on how to compute $\pi_0(H)$ for a spherical subgroup
$H$ of a connected reductive group $G$ from its spherical system. It
turned out that this problem had already been solved by Hofscheier
\cite{Hofscheier}*{Cor.~1.5} where it is a byproduct of much more
general results.  Since the answer to Leslie's question is actually
quite simple, we thought it to be worthwhile to write up a more
straightforward account. For added value, we generalized the result in
two ways. First, we extended it to fields of arbitrary characteristic
$p\ge0$ and thereby compute the $p'$-part of $\pi_0(H)$. Secondly, since
$\pi_0(H)$ is a quotient of the étale fundamental group of $G/H$, we
also compute the entire $p'$-part of $\pi_1(G/H)$.

\section{Generalities}

All varieties will be defined over an algebraically closed field $k$
with characteristic exponent $p\ge1$. A \emph{cover} of a variety $X$
will be a finite étale morphism $\Xq\to X$ such that $\Xq$ is also a
variety, i.e., irreducible. It is a \emph{$p'$-cover} if additionally
the order of $\|Gal|(k(\Xq)|k(X))$ is prime to $p$. Of course,
this is no restriction for $p=1$.

In the following, algebraic groups will be linear and smooth while
subgroups may also be group schemes. In particular, when $X$ is a
$G$-variety then the stabilizer $G_x$ of a point $x\in X$ will be
considered to be a group scheme. This entails that the orbit $Gx$ is
isomorphic to the quotient $G/G_x$. This also means that the group
$\|Aut|^G(Gx)$ is a group scheme which is isomorphic to $N_G(G_x)/G_x$
where $N_G(H)$ denotes the normalizer subgroup scheme of a subgroup
scheme $H\subseteq G$.

For any group scheme $H$, let $H^\circ$ be its
connected component of unity. Then $\pi_0(H):=H/H^\circ$ is an honest
finite group.

We start with a statement on covers of homogeneous varieties due to
Brion and Szamuely \cite{BrionSzamuely}. For the convenience of the
reader, we provide a slightly different proof.

\begin{proposition}\label{lemma:homocover}
  Let $G$ be a connected linear algebraic group, let $X$ be a
  homogeneous space for $G$, and let $q:\Xq\to X$ be a $p'$-cover. Let
  $\Gq$ be a connected component of $G\times_X\Xq$. Then $\Gq$ carries
  the structure of a connected linear algebraic group such that the
  projection $\qq:\Gq\to G$ is a homomorphism, $\Xq$ is a homogeneous
  $\Gq$-variety, and $q$ is $\Gq$-equivariant.
\end{proposition}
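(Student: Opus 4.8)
The approach is to produce $\Gq$ as a connected component of a base‑changed cover and then transport the group law of $G$ and the action of $G$ on $X$ across the étale map, by rigidity of lifts to covers. Fix a base point $x_0\in X$, let $\pi\colon G\to X$, $g\mapsto gx_0$, be the orbit map (faithfully flat, as $X$ is homogeneous), and choose $\xq_0\in q^{-1}(x_0)$. With $G\times_X\Xq$ formed along $\pi$ and $q$, let $\Gq$ be its connected component through $(e,\xq_0)$, with projections $\qq\colon\Gq\to G$ and $\tilde\pi\colon\Gq\to\Xq$. As a component of the base change of the $p'$-cover $q$, the map $\qq$ is a finite étale surjection; hence $\Gq$ is smooth over $G$, therefore irreducible, i.e.\ a variety, and affine because $\qq$ is finite and $G$ is affine. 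So $\Gq$ is a connected affine variety with distinguished point $e_{\Gq}:=(e,\xq_0)$, and $q\circ\tilde\pi=\pi\circ\qq$ by the defining relation of the fibre product, so $\tilde\pi$ will turn out to be the orbit map through $\xq_0$.

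The engine is a rigidity lemma for a cover $r\colon\Yq\to Y$: if $Z$ is a connected variety, $z_0\in Z$, $g\colon Z\to Y$ a morphism with $g(z_0)=y_0$, and $\yq_0\in r^{-1}(y_0)$ is fixed, then there is at most one lift $\tilde g\colon Z\to\Yq$ of $g$ with $\tilde g(z_0)=\yq_0$, and one exists exactly when the component of $Z\times_Y\Yq$ through $(z_0,\yq_0)$ maps isomorphically onto $Z$, which holds as soon as $g_*\pi_1(Z,z_0)\subseteq r_*\pi_1(\Yq,\yq_0)$. I apply this to $Z=\Gq\times\Gq$ with $\Gq\times\Gq\xrightarrow{\qq\times\qq}G\times G\xrightarrow{m}G$, lifting along $\qq$ through $e_{\Gq}$, to get a multiplication $\mu\colon\Gq\times\Gq\to\Gq$; and to $Z=\Gq\times\Xq$ with $\Gq\times\Xq\xrightarrow{\qq\times q}G\times X\xrightarrow{a}X$, lifting along $q$ through $\xq_0$, to get an action map $\alpha\colon\Gq\times\Xq\to\Xq$ with $q\circ\alpha=a\circ(\qq\times q)$ — i.e.\ $q$ is $\Gq$-equivariant by construction. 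In each case one checks the lift lands in the component $\Gq$ itself by evaluating at $e_{\Gq}$ and using connectedness of the source. The $\pi_1$-hypotheses hold because the restriction of the relevant morphism to each of the two factor slices of $Z$ has $\pi_1$-image inside the target subgroup: for the slices built from $\qq$ alone this is immediate, while for the slice $\Gq\times\{\xq_0\}$ of $\Gq\times\Xq$ (on which the morphism restricts to $\pi\circ\qq$) it is exactly the identity $\qq_*\pi_1(\Gq)=\pi_*^{-1}\big(q_*\pi_1(\Xq)\big)$, valid because $\Gq$ is a component of the pullback of $q$ along $\pi$. Since $\pi_1$ of a product of two connected varieties is topologically generated by the images of its two factor slices — the prime‑to‑$p$ incarnation of $\pi_1(V\times W)=\pi_1(V)\times\pi_1(W)$ — this yields $g_*\pi_1(Z)\subseteq r_*\pi_1(\Yq)$ in both cases.

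All group and action axioms then follow from the uniqueness half of the lemma: each is an equality of two lifts, over the appropriate cover, of one morphism from a connected variety, the two lifts agreeing at the distinguished point. Thus associativity of $\mu$ is verified on $\Gq^3$, the axiom $\alpha(\mu(\gamma,\delta),\tilde y)=\alpha(\gamma,\alpha(\delta,\tilde y))$ on $\Gq^2\times\Xq$; the unit laws use $\alpha(e_{\Gq},-)=\mathrm{id}$ and $\alpha(\gamma,\xq_0)=\tilde\pi(\gamma)$ (the latter because $\tilde\pi$ and $\gamma\mapsto\alpha(\gamma,\xq_0)$ are two lifts of $\pi\circ\qq$ through $\xq_0$); and inverses come from one further application, lifting $(\gamma,\tilde y)\mapsto\qq(\gamma)^{-1}q(\tilde y)$ to some $\beta\colon\Gq\times\Xq\to\Xq$, noting $\beta(\gamma,-)$ inverts $\alpha(\gamma,-)$, and taking the inverse of $\gamma$ to be $\big(\qq(\gamma)^{-1},\beta(\gamma,\xq_0)\big)$. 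That $\qq$ is a homomorphism is forced by the first coordinate of $\mu$. Finally, $\Xq$ is $\Gq$-homogeneous: $q$ maps any $\Gq$-orbit onto a $G$-orbit, hence onto all of $X$, so since $q$ is finite every $\Gq$-orbit has dimension $\dim\Xq$ and is therefore open in the irreducible variety $\Xq$; as $\Xq$ is connected and partitioned into open orbits, there is only one.

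The only step that is not formal is the $\pi_1$ input of the second paragraph: the standard identity $\qq_*\pi_1(\Gq)=\pi_*^{-1}(q_*\pi_1(\Xq))$ from the Galois theory of covers, and — the real point — the fact that a $p'$-cover of a product $V\times W$ of connected varieties that restricts to a trivial cover on $V\times\{w_0\}$ and on $\{v_0\}\times W$ is trivial, applied here to $\Gq\times\Gq$ and $\Gq\times\Xq$. This is where the prime‑to‑$p$ hypothesis is indispensable, and I expect it to be the technical heart of the argument; everything else is bookkeeping with the rigidity lemma together with a dimension count, requiring care only to ensure that each lift stays inside the chosen component $\Gq$.
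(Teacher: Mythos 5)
Your argument is not the paper's: it is essentially the Brion--Szamuely route, whose key input is the prime-to-$p$ K\"unneth formula $\pi_1^{(p')}(V\times W)\cong\pi_1^{(p')}(V)\times\pi_1^{(p')}(W)$ for connected, not necessarily proper, varieties in characteristic $p$ --- a deep theorem (Orgogozo, via alterations) which you correctly single out as the technical heart. The paper deliberately routes around this: it quotes Miyanishi's theorem to put the group structure on $\Gq$ in one stroke (so no lifting of the multiplication, the inversion, or the action is ever performed), and then identifies $\Xq$ by elementary Galois theory of covers --- the right $H^\circ$-action on $G\times_X\Xq$ produces a connected subgroup $H^1=\qq^{-1}(H^\circ)^\circ$ of $\Gq$, the map $\Gq/H^1\to G/H$ is Galois with group $H^2/H^1$ where $H^2=\qq^{-1}(H)$, and $\Xq$ is the intermediate cover $\Gq/\Hq$ for the corresponding subgroup $\Hq\subseteq H^2$. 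Your approach buys self-containedness (you in effect reprove Miyanishi's theorem) and a direct construction of the action; the paper's buys elementarity of the external input, which is exactly the point of its remark following the proposition.

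One step of yours does not go through as written: the construction of inverses. For $\beta$ the restriction to the slice $\Gq\times\{\xq_0\}$ is $\gamma\mapsto\qq(\gamma)^{-1}x_0=\pi\bigl(\iota(\qq(\gamma))\bigr)$ with $\iota$ the inversion of $G$, not $\pi\circ\qq$, so the required containment is $\iota_*\bigl(\qq_*\pi_1(\Gq)\bigr)\subseteq\qq_*\pi_1(\Gq)$; this is not covered by the identity $\qq_*\pi_1(\Gq)=\pi_*^{-1}\bigl(q_*\pi_1(\Xq)\bigr)$, is not formal, and the same obstruction appears if you instead lift $\iota\circ\qq$ directly along $\qq$. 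It does follow from the K\"unneth input you already invoke: $m_*$ restricted to either factor of $\pi_1^{(p')}(G)\times\pi_1^{(p')}(G)$ is the identity, so by Eckmann--Hilton $\pi_1^{(p')}(G)$ is abelian with $m_*$ the addition, whence $\iota_*=-\mathrm{id}$ preserves every open subgroup of index prime to $p$. You must add this step. With that supplement --- and the observation that $\bigl(\qq(\gamma)^{-1},\beta(\gamma,\xq_0)\bigr)$ stays in the component $\Gq$ because it depends continuously on $\gamma$ and equals the identity at $\gamma=e_{\Gq}$ --- the remaining bookkeeping (uniqueness of lifts for the axioms, openness of orbits for homogeneity) is sound.
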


\begin{proof}
  Since the Galois group of $\qq$ is a subgroup of the Galois group of
  $q$ it is also a $p'$-cover. Thus, by a theorem of Miyanishi
  \cite{Miyanishi}*{Thm.\ 2}, $\Gq$ has the structure of a linear
  algebraic group such that $\qq:\Gq\to G$ is a group homomorphism
  with finite kernel. We need to show that $\Xq$ has the structure of
  a homogeneous $\Gq$-variety such that both $\Gq\to\Xq$ and
  $\Xq\to X$ are $\Gq$-equivariant.

  To this end, choose a point $x_0\in X$ identifing $X$ with $G/H$
  where $H=G_{x_0}$ (possibly a subgroup scheme) and let
  $H^\circ\subseteq H$ be its connected component of unity. Let
  $e\in G$ and $\eq\in\Gq$ be the identity elements. Then
  $\eq=(e,\xq_0)$ where $\xq_0$ is a point of $\Xq$ lying over
  $x_0$. The problem is to show that the preimage $\Hq$ of $\xq_0$ in
  $\Gq$ is a subgroup and that $\Gq\to\Xq$ is the quotient by $\Hq$.

  We first show that $\Hq^\circ$ is a subgroup of $\Gq$. Since the
  orbit map $G\to G/H$ is right $H$-invariant, we get a right
  $H$-action on $G\times_X\Xq$ by $(g,\xq)h=(gh,\xq)$ and therefore a
  right $H^\circ$-action on $\Gq$ such that $\qq$ is
  $H^\circ$-equivariant (recall that $\Gq$ is just a connected
  component of $G\times_X\Xq$). Let $H^1=\eq H^\circ\subseteq\Gq$ be
  the $H^\circ$-orbit of $\eq$. The freeness of the $H$-action on $G$
  implies that $\qq:H^1\to H^\circ$ is an isomorphism of right
  $H^\circ$-varieties. Since $\qq^{-1}(H^\circ)\to H^\circ$ is étale
  it follows that $H^1=\qq^{-1}(H^\circ)^\circ$. The latter being a
  subgroup of $\Gq$, and thus also $H^1$ is one. It follows that the right
  $H^\circ$-orbits of $\Gq$ are precisely the right
  $H^1$-cosets. These map all to a point in $X$ and therefore, by
  connectedness, to a point in $\Xq$. Thus, the morphism $\Gq\to\Xq$
  factors through $\Gq/H^1$ and we get the following diagram:
  \[
    \cxymatrix{
      \Gq\ar[r]\ar[d]^\qq&\Gq/H^1\ar[r]\ar[d]^{q^1}&\Xq\ar[d]^q\\
      G\ar[r]&G/H^\circ\ar[r]&G/H}
  \]
  Thereby, all arrows not involving $\Xq$ are $\Gq$-equivariant. We
  see that $\Xq$ is sandwiched between $\Gq/H^1$ and $\Gq/H^2$ with
  $H^2:=\qq^{-1}(H)\subseteq\Gq$. Moreover, $H^1$ is the connected
  component of $H^2$. Thus, $\Gq/H^1\to\Gq/H^2$ is a Galois cover with
  Galois group $H^2/H^1$. Now Galois theory tells us that $\Xq$ is of
  the form $(\Gq/H^1)/\Gamma$ where $\Gamma\subseteq H^2/H^1$ is a
  subgroup. With $\Hq$ the preimage of $\Gamma$ in $H^2$ we get
  $\Xq=\Gq/\Hq$ as claimed.
\end{proof}

\begin{remark}
  The difference between our proof above and that of Brion-Szamuely is
  that we deduce it from Miyanishi's paper \cite{Miyanishi} while
  \cite{BrionSzamuely} use Orogonzo's \cite{Orogonzo}. Thus, our
  approach is all-in-all more elementary but less conceptual.
\end{remark}

\section{Spherical varieties}

Let $G$ be a connected reductive group with Borel subgroup
$B=TU\subseteq G$. The lattice of weights of $G$ is by definition
$\Xi(G):=\|Hom|(T,\G_m)$ where $\G_m$ is the multiplicative group of
$k$.

Recall that a $G$-variety $X$
is \emph{spherical} if $B$ has a dense open orbit in $X$. A closed
subgroup $H\subseteq G$ is \emph{spherical} if the homogeneous space
$X=G/H$ is spherical.

A \emph{weight} of $X$ is a character $\chi\in\Xi(G)$ such that there
exists a non-zero rational function $f_\chi\in k(X)$ which is
$B$-semiinvariant with character $\chi$, i.e., with
\[
  f_\chi(b^{-1}x)=\chi(b)f(x)
\]
for $b\in B$ and $x\in X$. The sphericity of $X$ implies that
$f_\chi$ is unique up to a non-zero scalar. Let $\Xi(X)$ be the set of
weights. It is a finitely generated free abelian group. Its rank is
called the \emph{rank of $G/H$}.

A \emph{color} of $X=G/H$ is by definition a $B$-invariant irreducible
divisor of $X$. Each color $D$ comes with a homomorphism
\[
  \delta_D:\Xi(X)\to\ZZ:\chi\mapsto v_D(f_\chi)=:\<\delta_D,\chi\>.
\]
where $v_D$ is the discrete valuation of $k(X)$ induced by $D$. The
set of colors will be denoted by $\cD(X)$.

Let $\Xi_\QQ(X):=\Xi(X)\otimes\QQ$ be the set of fractional weights of
$X$. Then $\delta_D$ extends uniquely to a $\QQ$-linear map
$\delta_D:\Xi_\QQ(X)\to\QQ$ and we define, following
\cite{Hofscheier}*{Def.\ 1.6} the \emph{$\cD$-saturation of $\Xi(X)$}
as
\[
  \Xi^\circ(X):=\{\chi\in\Xi_\QQ(X)\mid\<\delta_D,\chi\>\in\ZZ\text{ for all }D\in\cD(X)\}.
\]
It contains the subgroup
\[
  \Xi^\circ_G(X):=\Xi^\circ(X)\cap\Xi(G)
\]
which itself contains $\Xi(X)$ as a subgroup of finite index. The
saturations are isogeny invariants:

\begin{lemma}\label{lemma:Xi0isogeny}
  Let $\gamma:\Xq\to X$ be an equivariant étale morphism of
  homogeneous spherical $G$-varieties. Then $\Xi^\circ(\Xq)=\Xi^\circ(X)$.
\end{lemma}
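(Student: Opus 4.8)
The plan is to show that $\gamma$ induces an identification of $\Xi_\QQ(\Xq)$ with $\Xi_\QQ(X)$ under which the set of colors and the associated functionals $\delta_D$ are preserved; the equality of $\cD$-saturations then follows formally from the defining formula. First I would observe that pullback along $\gamma$ gives an injection $\gamma^*:k(X)\hookrightarrow k(\Xq)$, compatible with the $B$-actions since $\gamma$ is $G$-equivariant; hence every $B$-semiinvariant $f_\chi\in k(X)$ pulls back to a $B$-semiinvariant in $k(\Xq)$ with the same character. This yields $\Xi(X)\subseteq\Xi(\Xq)$, and since both are lattices of the same rank (an étale morphism of homogeneous spaces is dominant with finite fibers, so the function fields have the same transcendence degree, and a standard sphericity argument bounds the rank) we get $\Xi_\QQ(\Xq)=\Xi_\QQ(X)$.

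Next I would analyze the colors. Because $\gamma$ is finite étale, it is both open and closed, and $B$-equivariant; so for a color $D\subseteq X$ the preimage $\gamma^{-1}(D)$ is a union of $B$-invariant irreducible divisors of $\Xq$, and conversely every color of $\Xq$ dominates (surjects onto, via $\gamma$) a color of $X$. Thus $\gamma$ sets up a surjection $\cD(\Xq)\twoheadrightarrow\cD(X)$. The key local computation is that along each color $\Eq$ of $\Xq$ lying over a color $D$ of $X$, the valuation $v_{\Eq}$ restricted to $k(X)$ equals $e\cdot v_D$ where $e\ge1$ is the ramification index; since $\gamma$ is étale, $e=1$, so $v_{\Eq}|_{k(X)}=v_D$ and therefore $\delta_{\Eq}=\delta_D$ as functionals on $\Xi_\QQ(X)=\Xi_\QQ(\Xq)$. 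Consequently the family $\{\delta_E\mid E\in\cD(\Xq)\}$ and the family $\{\delta_D\mid D\in\cD(X)\}$ coincide as sets of linear functionals on the common rational weight space.

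Finally, plugging this into the definition,
\[
  \Xi^\circ(\Xq)=\{\chi\in\Xi_\QQ(\Xq)\mid\<\delta_E,\chi\>\in\ZZ\ \forall E\in\cD(\Xq)\}
  =\{\chi\in\Xi_\QQ(X)\mid\<\delta_D,\chi\>\in\ZZ\ \forall D\in\cD(X)\}=\Xi^\circ(X),
\]
using $\Xi_\QQ(\Xq)=\Xi_\QQ(X)$ and the equality of the two sets of functionals.

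The main obstacle I anticipate is the surjectivity of the color map $\cD(\Xq)\to\cD(X)$ together with the étaleness (hence ramification index one) input: one must make sure that no color of $X$ is "missed" (i.e., that its preimage is not empty and is genuinely a divisor, which follows from $\gamma$ being finite surjective) and that the étale hypothesis is what kills the ramification indices; the rank comparison $\operatorname{rk}\Xi(\Xq)=\operatorname{rk}\Xi(X)$, while intuitively clear from $\gamma$ being dominant with finite fibers, also deserves a careful one-line justification via the invariance of the open $B$-orbit structure under $\gamma$.
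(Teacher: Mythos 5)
Your proposal is correct and follows essentially the same route as the paper's proof: identify $\Xi_\QQ(\Xq)$ with $\Xi_\QQ(X)$ via the rank argument, use that $\gamma$ is unramified along each color to get $v_{\Dq}|_{k(X)}=v_{\gamma(\Dq)}$ and hence $\delta_{\Dq}=\delta_D$, and conclude via the surjectivity of $\cD(\Xq)\to\cD(X)$. The extra justifications you supply (surjectivity of the color map, equality of ranks) are details the paper leaves implicit, and they are fine.
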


\begin{proof}
  Since $\Xi(X)\subseteq\Xi(\Xq)$ and both groups have the same rank
  we get $\Xi_\QQ(\Xq)=\Xi_\QQ(X)$. Since $\gamma$ is unramified along each
  color $\Dq\subset\Xq$, the restriction of $v_\Dq$ to $k(X)$ equals
  $v_D$ with $D=\gamma(\Dq)$. Hence, $\delta_\Dq=\delta_D$ on
  $\Xi_\QQ(X)$. Since $\cD(\Xq)\to\cD(X)$ is surjective, the assertion
  follows.
\end{proof}

Let $S^\vee\subset\|Hom|(\Xi(G),\ZZ)$ be the set of
simple coroots of $G$ and let
\[\label{eq:XiG}
  \Xi^\circ(G):=\{\chi\in\Xi_\QQ(G)\mid\<S^\vee,\chi\>\in\ZZ\}.
\]
This group can also be characterized as the union of all weight
lattices $\Xi(\Gq)$ where $\Gq$ runs through all isogenies $\Gq\to G$.

\begin{remark}
  This is actually consistent with the notation $\Xi^\circ(X)$ if one
  considers $X:=G$ as a spherical $G\times G$-variety. Then $\Xi(G)$
  can be identified with $\Xi(X)$ and $S^\vee$ with the set of colors
  of $X$. We won't use this in the following.
\end{remark}

For the computation of the fundamental group later on, we need the
following fact:

\begin{lemma}\label{lemma:LunaKnop}
  Let $X$ be a homogeneous spherical variety. Then
  $\Xi^\circ(X)\subseteq\Xi^\circ(G)$.
\end{lemma}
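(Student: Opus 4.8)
The plan is to reduce the inclusion to the standard dictionary between the simple coroots of $G$ and the colors of $X$. It suffices to fix a simple coroot $\alpha^\vee\in S^\vee$, with associated simple root $\alpha$ and minimal parabolic $P\supsetneq B$, and to prove that $\<\alpha^\vee,\chi\>\in\ZZ$ for every $\chi\in\Xi^\circ(X)$; since $\Xi^\circ(X)\subseteq\Xi_\QQ(X)\subseteq\Xi_\QQ(G)$ by construction, this already yields $\chi\in\Xi^\circ(G)$.

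First I would isolate the easy case. Let $X_0\subseteq X$ be the open $B$-orbit and $Y:=PX_0$, an open $P$-stable subset whose $B$-orbits are $X_0$ together with the colors $D\in\cD(X)$ satisfying $PD\neq D$; there are $0$, $1$, or $2$ of the latter. If $P$ moves no color, then $Y=X_0$ is a single $P$-orbit, and any $B$-semiinvariant $f_\chi$ restricts to an invertible regular function on $X_0$ --- its zero and polar divisors are $B$-stable effective divisors on a homogeneous space, hence empty. An invertible regular function on a homogeneous space $X_0$ of the connected group $P$ is automatically $P$-semiinvariant (the group $P$ acts on the lattice $\cO^\times(X_0)/k^\times$, hence trivially), so $\<\alpha^\vee,\chi\>=0$ for all $\chi\in\Xi(X)$, and hence, taking $\QQ$-linear combinations, for all $\chi\in\Xi_\QQ(X)$.

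If $P$ moves one or two colors I would invoke the structure theory of spherical varieties of Luna and Knop: in that case the restriction of $\alpha^\vee$ to $\Xi_\QQ(X)$ is an \emph{integral} linear combination of the functionals $\delta_D$, $D\in\cD(X)$ --- explicitly it equals $\delta_{D^+}+\delta_{D^-}$ when $P$ moves two colors $D^\pm$, and $\delta_D$ resp.\ $2\delta_D$ when it moves a single color $D$, according to whether $\alpha$ or $2\alpha$ is the corresponding spherical root. Granting this, for $\chi\in\Xi^\circ(X)$ each $\<\delta_D,\chi\>$ lies in $\ZZ$ by definition of $\Xi^\circ(X)$, so $\<\alpha^\vee,\chi\>\in\ZZ$ as well; combined with the previous paragraph this is the lemma.

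I expect the only real obstacle to be the identity for $\alpha^\vee|_{\Xi_\QQ(X)}$ in the one- and two-color cases, and in particular the \emph{integrality} of its coefficients --- this cannot be obtained from a primitivity argument, since already for $X=SL_2/N_G(T)$ one has $\Xi(X)=\ZZ\cdot 2\alpha$ with $\<\delta_D,2\alpha\>=2$ and $\<\alpha^\vee,2\alpha\>=4$, so $\delta_D$ is a proper divisor of $\alpha^\vee|_{\Xi_\QQ(X)}$ in $\|Hom|(\Xi(X),\ZZ)$. I would take these identities from the structure theory; re-deriving them from the local structure of $Y$ as a $P$-variety would be the bulk of a self-contained argument.
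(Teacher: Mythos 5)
Your proposal is correct and is essentially the paper's own argument: the proof there consists precisely of citing Luna (\emph{Grosses cellules}, Prop.~3.4, for $p=1$) and \cite{KnopLocalization}*{Prop.~2.3} (for general $p$) for the fact that every simple coroot restricts on $\Xi_\QQ(X)$ to a $\ZZ$-linear combination of the $\delta_D$, from which the inclusion is immediate. Your extra case analysis (including the self-contained treatment of the case where $P$ moves no color) and the $SL_2/N_G(T)$ sanity check are fine elaborations of the same route, with the integrality of the coefficients correctly identified as the one point that must be imported from the structure theory.
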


\begin{proof}
  By Luna \cite{LunaGC}*{Prop.\ 3.4} (for $p=1$) and
  \cite{KnopLocalization}*{Prop.\ 2.3} (for $p\ge1$), the restriction
  of every simple coroot to $\Xi_\QQ(X)$ is a $\ZZ$-linear combination
  of the forms $\delta_D$ where $D$ is a color of $X$. The assertion
  follows.
\end{proof}

The following lemma is the technical core of our approach. It asserts
the existence of enough coverings.

\begin{lemma}\label{lemma:CoverExist}
  Let $X=G/H$ be spherical and let $\Gamma\subseteq\Xi^\circ(X)$ be a
  subgroup with $\Xi(X)\subseteq\Gamma$ and $[\Gamma:\Xi(X)]$ finite
  and coprime to $p$. Then there is a cover $\Xq\to X$ such that
  $\Gamma=\Xi(\Xq)$ (considered as a subgroup of
  $\Xi_\QQ(\Gq)=\Xi_\QQ(G)$) where $\Gq$ is as in
  \cref{lemma:homocover}. Moreover, $\Gq=G$ if
  $\Gamma\subseteq\Xi(G)$.
\end{lemma}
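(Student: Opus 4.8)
\emph{Strategy and reduction.}
The plan is to reduce to the case $\Gamma\subseteq\Xi(G)$ and then to build $\Xq$ by pulling back, along the open $B$-orbit of $X$, an isogeny of quotient tori of $T$, and normalizing. By \cref{lemma:LunaKnop} we have $\Gamma\subseteq\Xi^\circ(X)\subseteq\Xi^\circ(G)$, and $[\Xi(G)+\Gamma:\Xi(G)]$ divides $[\Gamma:\Xi(X)]$, hence is prime to $p$. So the root datum with character lattice $\Xi(G)+\Gamma\subseteq\Xi^\circ(G)$ (with the same roots and coroots as $G$) defines a connected reductive group $\Gq_1$ with a separable isogeny $\Gq_1\to G$ and $\Gamma\subseteq\Xi(\Gq_1)$; if $\Gamma\subseteq\Xi(G)$ we take $\Gq_1=G$. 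Regarding $X$ as a spherical $\Gq_1$-variety $\Gq_1/\Hq_1$ ($\Hq_1$ the preimage of $H$) changes neither $\Xi(X)$, nor $\Xi^\circ(X)$, nor the set of colors, and the hypotheses on $\Gamma$ persist, now with $\Gamma\subseteq\Xi(\Gq_1)$. So we may and do assume $\Gamma\subseteq\Xi(G)$.

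\emph{Construction.}
Let $A:=\operatorname{Spec}k[\Xi(X)]$ and $\Aq:=\operatorname{Spec}k[\Gamma]$; these are quotient tori of $T$ fitting into $T\to\Aq\to A$, and $\Aq\to A$ is a separable isogeny of degree $[\Gamma:\Xi(X)]$, prime to $p$. The homomorphism $T\to\Aq$ induces, via $B\to T\to\Aq$, an action of $B$ on $\Aq$ compatible with $\Aq\to A$. Recall that the weights of a spherical variety are exactly the characters of $B$ that are trivial on the generic $B$-stabilizer (restrict semiinvariants to the open $B$-orbit); so on the open $B$-orbit $O=Bx_0\cong B/B_{x_0}$ of $X$ the functions $f_\chi$ with $\chi\in\Xi(X)$ are units, each $\chi$ is trivial on $B_{x_0}$, and they define the $B$-equivariant morphism $p\colon O\to A$, namely the quotient $B/B_{x_0}\to B/\ker(B\to A)=A$. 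Put $\Oq:=O\times_A\Aq$ with the diagonal $B$-action, and let $\Xq$ be the normalization of $X$ in $k(\Oq)$; then $\Oq$ is open in $\Xq$. By construction $\Oq\to O$ is finite étale and Galois with abelian group of order $[\Gamma:\Xi(X)]$, and $k(\Oq)$ is obtained from $k(X)=k(O)$ by adjoining $n$-th roots $g_\mu$ of the $f_{n\mu}$, for $\mu$ running over representatives of generators of $\Gamma/\Xi(X)$ and $n$ their orders modulo $\Xi(X)$ (all prime to $p$).

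\emph{Verification.}
(i) $\Oq$ is connected, so $\Xq$ is a variety: since $\Oq\to O$ is the pull-back of the abelian Galois cover $\Aq\to A$, non-connectedness would produce a proper subextension $k(A)\subsetneq k(M)\subseteq k(\Aq)$, with $\Xi(X)\subsetneq M\subseteq\Gamma$, embedding into $k(O)=k(X)$ over $k(A)$; choosing $\mu\in M\setminus\Xi(X)$, its image $g\in k(X)$ satisfies $g^n=c\,f_{n\mu}$ for some $c\in k^\times$, and as $B$ is connected it has no nonconstant homomorphism to $\mu_n$, so $g$ is $B$-semiinvariant of weight $\mu$ — forcing $\mu\in\Xi(X)$, a contradiction. (ii) $\Xq\to X$ is étale: $X$ is smooth and $\Xq$ normal, so by the Zariski--Nagata purity theorem it suffices to check unramifiedness along every prime divisor of $X$; over $O$ the map is the étale $\Oq\to O$, and since $X$ is homogeneous it has no $G$-stable divisors, so the remaining prime divisors are the colors $D\in\cD(X)$. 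Adjoining $g_\mu=\sqrt[n]{f_{n\mu}}$ is unramified along $D$ iff $n\mid v_D(f_{n\mu})=n\<\delta_D,\mu\>$, i.e. iff $\<\delta_D,\mu\>\in\ZZ$, which holds because $\mu\in\Gamma\subseteq\Xi^\circ(X)$. Hence $\Xq\to X$ is a $p'$-cover. (iii) $\Xi(\Xq)=\Gamma$: the $B$-action on $\Oq$ extends to $\Xq$ by functoriality of normalization, and $\Oq$, being smooth and open in the normal $\Xq$ and mapping onto the single $B$-orbit $O$, is the open $B$-orbit of $\Xq$, with generic stabilizer $\Bq_{x_0}=B_{x_0}\cap\ker(B\to\Aq)$; thus $\Xi(\Xq)=\{\chi\in\Xi(T):\chi|_{\Bq_{x_0}}=1\}$. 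Now $\Xi(X)=\Xi(T/S)$ with $S$ the image of $B_{x_0}$ in $T$, so $S=\ker(T\to A)$; the image of $\Bq_{x_0}$ in $T$ is therefore $\ker(T\to\Aq)$, whence $\Xi(\Xq)=\{\chi:\chi|_{\ker(T\to\Aq)}=1\}=\Gamma$.

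\emph{Conclusion and main obstacle.}
By \cref{lemma:homocover}, $\Xq$ is homogeneous under a connected component $\Gq$ of $G\times_X\Xq$, with $\Gq\to G$ a separable isogeny and $\Xi(\Gq)\supseteq\Xi(G)+\Xi(\Xq)=\Xi(G)+\Gamma$; since $\Gq$ is the minimal isogeny cover of $G$ over which $\Xq$ becomes a homogeneous cover of $X$, in fact $\Xi(\Gq)=\Xi(G)+\Gamma$. So $\Xq$, $\Gamma=\Xi(\Xq)$ inside $\Xi_\QQ(\Gq)=\Xi_\QQ(G)$, and $\Gq$ are as required; and if $\Gamma\subseteq\Xi(G)$ then $\Xi(\Gq)=\Xi(G)$, so $\Gq=G$. (For the original, possibly non-reduced $\Gamma$, apply the above to $\Gq_1$: the resulting cover has $\Xi(\Xq)=\Gamma$, and its group from \cref{lemma:homocover} has character lattice $\Xi(\Gq_1)+\Gamma=\Xi(\Gq_1)$, hence equals $\Gq_1$.) The conceptual heart is step (ii): the hypothesis $\Gamma\subseteq\Xi^\circ(X)$ is precisely what makes the new cover unramified along the colors; I expect the other delicate point to be the identification $\Xi(\Gq)=\Xi(G)+\Xi(\Xq)$ of the group produced by \cref{lemma:homocover}, which rests on the minimality of $\Gq$ in Miyanishi's theorem.
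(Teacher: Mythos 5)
Your construction of the cover is essentially the paper's, packaged differently: where the paper reduces by induction to a cyclic step and adjoins a $d$-th root of $f_{d\chi}$, you perform all steps at once by pulling back the torus isogeny $\Aq\to A$ along the open $B$-orbit and normalizing. The verification that $\Gamma\subseteq\Xi^\circ(X)$ forces unramifiedness along the colors, followed by purity of the branch locus, the connectedness argument, and the computation of $\Xi(\Xq)$ via the generic $B$-stabilizer are all sound and agree in substance with the paper. The preliminary reduction to $\Gamma\subseteq\Xi(G)$ by passing to the isogeny cover $\Gq_1$ with $\Xi(\Gq_1)=\Xi(G)+\Gamma$ (legitimate by \cref{lemma:LunaKnop} and the coprimality hypothesis) is a reasonable substitute for the paper's device of acting with $\Bq=\Tq\ltimes U$ throughout.

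The gap is exactly where you suspected it: the claim $\Xi(\Gq)=\Xi(G)+\Xi(\Xq)$, and with it the assertion $\Gq=G$ when $\Gamma\subseteq\Xi(G)$. Minimality of $\Gq$ gives $\Xi(\Gq)\subseteq\Xi(\Gq')$ only for those isogenies $\Gq'\to G$ for which you can already exhibit a lift of the action to $\Xq$; to get $\Xi(\Gq)\subseteq\Xi(G)+\Gamma$ you would have to exhibit such a lift for the group with character lattice $\Xi(G)+\Gamma$, which in the reduced case $\Gamma\subseteq\Xi(G)$ is $G$ itself --- so the argument presupposes the very statement $\Gq=G$ it is meant to prove. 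The repair is already contained in your construction: when $\Gamma\subseteq\Xi(G)$ the $B$-action on $X$ genuinely lifts to $\Xq$ (your diagonal action on $O\times_A\Aq$, extended to the normalization), so $b\mapsto(b,b\xq_0)$ is a homomorphic section of $\qq:\Gq\to G$ over $B$; its image is a connected closed subgroup of $\qq^{-1}(B)$ of full dimension, hence equals $\qq^{-1}(B)^\circ$, which contains the central kernel of $\qq$ --- and since $\qq$ is injective on the image of the section, that kernel is trivial. This is the paper's ``isogeny theorem'' step, and without it the ``Moreover'' clause --- the part of the lemma actually used in \cref{cor:conChar0-p} --- is unproved. (A smaller point, which the paper also leaves implicit and which I do not count against you: one should say why the weight lattice of $\Xq$ computed from your constructed $B$-action coincides with $\Xi(\Xq)$ taken with respect to the $\Gq$-structure supplied by \cref{lemma:homocover}.)
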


\begin{proof}
  Because of \cref{lemma:Xi0isogeny} we may assume by induction that
  $\Gamma/\Xi(X)$ is cyclic of order $d\ge0$. Let
  $\chi\in\Xi^\circ(X)$ be a lift of the generator. Let $\Tq$ be the
  torus with $\Xi(\Tq)=\Gamma+\Xi(G)=\ZZ\chi+\Xi(G)$. Then the
  inclusion of $\Xi(G)$ in $\Xi(\Tq)$ induces an isogeny $\Tq\to
  T$. Since $B=T\ltimes U$, this isogeny extends to an isogeny
  $\Bq:=\Tq\ltimes U\to B$ by means of which $\Bq$ acts on $X$ with
  the same dense orbit $Bx_0$. This in turn induces an inclusion
  $k(X)=k(Bx_0)\into k(\Bq)$ of function fields. Thereby, a
  semiinvariant $f_\eta$ on $X$ is mapped to the character $\eta$ of
  $\Bq$.

  Consider the character $\chi$ as an element of $k(\Bq)$ and denote
  it as such by $f$. Let $F:=k(X)(f)$ be the subfield generated by $f$
  over $k(X)$. Since by construction $d\chi\in\Xi(X)$, we have
  $f^d=f_{d\chi}\in k(X)$ which shows that $F$ is a cyclic Galois
  extension of $k(X)$ of degree $d$. Observe also that $F$ is stable
  for the action of $\Bq$.
  
  Let $\gamma:\Xq\to X$ be the integral closure of $X$ in $F$. This
  means that $\Xq$ is a normal variety with $k(\Xq)=F$ and $\gamma$ is
  a finite $B$-morphism. Since $\Bq$ acts on $X$ and $f$, it also acts
  on $\Xq$ such that $\gamma$ is equivariant.

  We claim that $\Xi(\Xq)=\Gamma$. Indeed,
  $\Gamma=\Xi(X)+\ZZ\chi\subseteq\Xi(\Xq)$ and
  $\Xi(B)+\ZZ\chi=\Xi(\Bq)$ by construction. Thus, if
  $\eta\in\Xi(\Xq)$ then $\eta=\eta_0+a\chi$ with
  $\eta_0\in\Xi(B)$ and
  $a\in\ZZ$. Hence
  $\eta_0=\eta-a\chi\in\Xi(B)\cap\Xi(\Xq)$. But the extensions
  $k(\Xq)|k(X)$ and
  $k(\Bq)|k(B)$ have the same cyclic Galois group $\pi$ of order
  $d$. Thus $\eta_0\in\Xi(\Xq)^\pi=\Xi(X)$ which implies the claim.

  Next we claim that $\gamma$ is globally unramified, i.e., an étale
  cover. Then the first assertion of the lemma would follow
  immediately from \cref {lemma:homocover}.

  To show the claim recall that the irreducible components
  of $X\setminus Bx_0$ are the colors of $X$. Let $D$ be one of them and let
  $E\subset\Xq$ be a prime divisor lying above $D$. Then $\cO_{X,D}$
  is a discrete valuation ring and $\cO_{\Xq,E}$ is its integral
  closure in $F$. Now we use the assumption $\chi\in\Xi^\circ(X)$
  which means $\<\delta_D,\chi\>\in\ZZ$. This implies
  $v_D(f_{d\chi})\in d\ZZ$ and therefore $f_{d\chi}=ug^d$ where $g$ is
  a uniformizer for $D$ and $u\in\cO_{X,D}^\times$. Hence $F|k(X)$ is
  also generated by the $d$-th root of the unit $u$ implying that $F$
  is unramified in $E$ over $k(X)$.  Since this holds for any color we
  see that $\gamma$ is unramified in codimension one. From Zariski's
  theorem on the purity of the branch locus (see, e.g., \cite{Altman})
  we infer that $\gamma$ is globally unramified proving the claim.

  Now assume $\chi\in\Xi(G)$. Then we have $\Bq=B$, i.e., the
  $B$-action on $X$ lifts to a $B$-action on $\Xq$. Thus, the
  inclusion morphism $B\into G$ lifts to a homomorphism
  $B\to\Gq\subseteq G\times_X\Xq:b\mapsto(b,b\xq_0)$. Hence, $\Gq\to G$ induces
  an isomorphism of Borel subgroups which implies
  $\Gq\overset\sim\to G$, e.g., by the isogeny theorem.
\end{proof}

\begin{corollary}\label{cor:conChar0-p}
  Let $X=G/H$ be spherical and $X^\circ=G/H^\circ$. Then
  $\Xi(X^\circ)$ is a subgroup of $\Xi^\circ_G(X)$. The index is a
  power of $p$.
\end{corollary}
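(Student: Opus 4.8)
The plan is to wedge $\Xi(X^\circ)$ between $\Xi^\circ_G(X)$ and a prime-to-$p$ subgroup of it supplied by \cref{lemma:CoverExist}. First I would observe that $q_0\colon X^\circ=G/H^\circ\to X=G/H$ is a cover: its fibres are the finite étale group scheme $\pi_0(H)=H/H^\circ$, so $q_0$ is finite étale, and $X^\circ$ is irreducible. (It need not be a $p'$-cover, but that is irrelevant here.) Applying \cref{lemma:Xi0isogeny} to this equivariant étale morphism gives $\Xi^\circ(X^\circ)=\Xi^\circ(X)$. Since the weights of any $G$-variety lie in $\Xi(G)$, and since every weight of $X^\circ$ is integral on each $\delta_D$ and hence lies in $\Xi^\circ(X^\circ)$, I obtain
\[
  \Xi(X^\circ)\subseteq\Xi^\circ(X^\circ)\cap\Xi(G)=\Xi^\circ(X)\cap\Xi(G)=\Xi^\circ_G(X),
\]
which is the first assertion; in particular the index $[\Xi^\circ_G(X):\Xi(X^\circ)]$ is finite, because $[\Xi^\circ_G(X):\Xi(X)]$ is.

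For the statement about this index, write $[\Xi^\circ_G(X):\Xi(X)]=m\,p^{a}$ with $p\nmid m$, and let $\Gamma$ be the subgroup of $\Xi^\circ_G(X)$ containing $\Xi(X)$ with $[\Gamma:\Xi(X)]=m$. Then $\Xi(X)\subseteq\Gamma\subseteq\Xi^\circ_G(X)=\Xi^\circ(X)\cap\Xi(G)$ and $[\Gamma:\Xi(X)]$ is finite and coprime to $p$, so \cref{lemma:CoverExist} applies; as $\Gamma\subseteq\Xi(G)$, its final clause produces a \emph{$G$-equivariant} cover $q\colon\Xq\to X$ with $\Xi(\Xq)=\Gamma$. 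By homogeneity $\Xq\cong G/H'$ for a subgroup scheme $H'\subseteq H$, with $q$ the natural projection.

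The crux is to show $H^\circ\subseteq H'$. Here I would use that $q$ is étale: its fibre over the base point $eH\in X$ is $H/H'$, which is therefore a finite étale $k$-scheme, i.e.\ a disjoint union of reduced points. The connected group scheme $H^\circ$ is mapped by $H\to H/H'$ to a single one of these, namely the coset $eH'$ (the image of the identity); hence $H^\circ$ factors through the fibre of $H\to H/H'$ over $eH'$, which is $H'$, so $H^\circ\subseteq H'$. Consequently $X^\circ=G/H^\circ\to G/H'=\Xq$ is a dominant $G$-morphism, and pulling back $B$-semiinvariant rational functions along it gives $\Gamma=\Xi(\Xq)\subseteq\Xi(X^\circ)$. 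Combined with the first part, $\Gamma\subseteq\Xi(X^\circ)\subseteq\Xi^\circ_G(X)$, so $[\Xi^\circ_G(X):\Xi(X^\circ)]$ divides $[\Xi^\circ_G(X):\Gamma]=p^{a}$, a power of $p$.

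I expect the only delicate point to be the inclusion $H^\circ\subseteq H'$: one has to deal with possibly non-smooth subgroup schemes in characteristic $p$, and to use that ``cover'' means finite étale (so the fibre $H/H'$ is genuinely reduced). Everything else is bookkeeping with the lemmas already established.
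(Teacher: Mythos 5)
Your proposal is correct and follows essentially the same route as the paper: the first inclusion via \cref{lemma:Xi0isogeny}, and the index statement by applying \cref{lemma:CoverExist} to the preimage $\Gamma$ of the $p'$-part of $\Xi^\circ_G(X)/\Xi(X)$ to get a $G$-equivariant cover $G/H'\to G/H$ with $H^\circ\subseteq H'$ and hence $\Gamma=\Xi(G/H')\subseteq\Xi(X^\circ)$. The only difference is that you spell out in more detail why $H^\circ\subseteq H'$, which the paper leaves implicit in the phrase ``a subgroup $\Hq\subseteq H$ of finite index.''
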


  \begin{proof} The first assertion follows
  from \cref{lemma:Xi0isogeny}:
  \[
    \Xi(X^\circ)\subseteq
    \Xi^\circ(X^\circ)\cap\Xi(G)=\Xi^\circ(X)\cap\Xi(G)=\Xi^\circ_G(X).
  \]
  For the second assertion we apply \cref{lemma:CoverExist} to the
  preimage $\Gamma\subseteq\Xi^\circ_G(X)$ of the $p'$-part of
  $\Xi^\circ_G(X)/\Xi(X)$. This yields $\Gq=G$ and a subgroup
  $\Hq\subseteq H$ of finite index with
  \[
    \Gamma=\Xi(G/\Hq)\subseteq\Xi(G/\Hq^\circ)=\Xi(X^\circ).\qedhere\hfill\qed
  \]
\end{proof}

In the remainder of the paper keep in mind that $k^*$ is a divisible
abelian group without $p$-torsion. Thus, the functor
$\|Hom|(\text{--},k^*)$ is exact and kills (exactly) the $p$-torsion.

\begin{lemma}\label{lemma:autom}
  Let $E$ be an algebraic group of $G$-automorphisms of the spherical
  variety $X=G/H$. Then the map
  \[\label{eq:Phi}
    \Phi:E\to\|Hom|(\Xi(X),k^*):\phi\mapsto
    [\chi\mapsto\frac{{}^\phi f_\chi}{f_\chi}]
  \]
  is a homomorphism. Its kernel is a unipotent group and its image is
  $\|Hom|(\Xi(X)/\Xi(X/E),k^*)$.
\end{lemma}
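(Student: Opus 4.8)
The plan is to analyze the map $\Phi$ in three stages corresponding to the three assertions: that $\Phi$ is a homomorphism, that $\ker\Phi$ is unipotent, and that $\operatorname{image}\Phi = \|Hom|(\Xi(X)/\Xi(X/E),k^*)$.

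First, to see that $\Phi$ is a homomorphism, I would observe that for $\phi,\psi\in E$ the semiinvariant $f_\chi$ is determined up to a scalar by its character $\chi$, so ${}^\phi f_\chi$ is again a $B$-semiinvariant of character $\chi$, hence a scalar multiple of $f_\chi$; this scalar is exactly $\Phi(\phi)(\chi)$. Then ${}^{\phi\psi}f_\chi = {}^\phi({}^\psi f_\chi) = {}^\phi(\Phi(\psi)(\chi) f_\chi) = \Phi(\psi)(\chi)\,{}^\phi f_\chi = \Phi(\psi)(\chi)\Phi(\phi)(\chi) f_\chi$, using that $\Phi(\psi)(\chi)\in k^*$ is a constant fixed by $\phi$. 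Since also $f_{\chi+\chi'} = c f_\chi f_{\chi'}$ for some constant $c$, one checks $\Phi(\phi)$ is additive in $\chi$. Finally, $\Phi$ is a morphism of varieties (the ratio ${}^\phi f_\chi/f_\chi$ depends algebraically on $\phi$), so it is a homomorphism of algebraic groups.

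Second, for the kernel: $\phi\in\ker\Phi$ means ${}^\phi f_\chi = f_\chi$ for every weight $\chi$, i.e.\ $\phi$ fixes every $B$-semiinvariant rational function. I would argue that $\ker\Phi$ is then a group of $G$-automorphisms acting trivially on $k(X)^{(B)}$, and conclude it must be unipotent. One clean way: $E$ embeds in $\|Aut|^G(X) = N_G(H)/H$ (a group scheme whose reductive part is a quotient of a torus, by sphericity—the connected automorphism group of a spherical homogeneous space is diagonalizable, cf.\ the structure of $N_G(H)/H$), and a torus acting nontrivially on $X$ must move some $B$-semiinvariant nontrivially because a torus acting on the affine-like invariants is detected by weights; hence the kernel of $\Phi$ contains no nontrivial torus and, being a subgroup scheme of a diagonalizable-by-nothing... — more carefully, $\|Aut|^G(X)^\circ$ is diagonalizable, so any subgroup with trivial character action is trivial on the connected component, and $\ker\Phi$ is thus finite; but $\|Hom|(\Xi(X),k^*)$ is a torus with no $p$-torsion, while a finite subgroup scheme mapping to zero under a homomorphism from a (possibly non-reduced) group could still be nontrivial only if it is infinitesimal, i.e.\ unipotent. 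So $\ker\Phi$ is unipotent. I would also note that the image, being a subgroup of the torus $\|Hom|(\Xi(X),k^*)$, is diagonalizable, consistent with $E/\ker\Phi$ being diagonalizable.

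Third, and this is where the real content lies, I must identify the image as $\|Hom|(\Xi(X)/\Xi(X/E),k^*)$. The inclusion $\operatorname{image}\Phi \subseteq \|Hom|(\Xi(X)/\Xi(X/E),k^*)$ is the easier direction: if $\chi\in\Xi(X/E)$ then $f_\chi$ may be chosen to descend from $X/E$, hence is $E$-invariant, so $\Phi(\phi)(\chi)=1$ for all $\phi$; thus every element of $\operatorname{image}\Phi$ kills $\Xi(X/E)$, i.e.\ factors through $\Xi(X)/\Xi(X/E)$. For the reverse inclusion I would use the exactness of $\|Hom|(\text{--},k^*)$ noted in the text: the short exact sequence $0\to\Xi(X/E)\to\Xi(X)\to\Xi(X)/\Xi(X/E)\to 0$ gives a surjection $\|Hom|(\Xi(X),k^*)\to\|Hom|(\Xi(X/E),k^*)$ with kernel exactly $\|Hom|(\Xi(X)/\Xi(X/E),k^*)$ up to $p$-torsion; but I actually want to realize every character of $\Xi(X)/\Xi(X/E)$ by an honest automorphism in $E$. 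The key point is that $\Xi(X/E)$ consists precisely of those weights fixed by all of $E$, which by the homomorphism property equals $\bigcap_{\phi\in E}\ker(\Phi(\phi)) = $ the annihilator of $\operatorname{image}\Phi$; dualizing, $\operatorname{image}\Phi$ is exactly the annihilator of $\Xi(X/E)$ inside $\|Hom|(\Xi(X),k^*)$, which is $\|Hom|(\Xi(X)/\Xi(X/E),k^*)$. The slightly delicate step is to justify that $\Xi(X/E)$ really is the full fixed sublattice $\Xi(X)^E$—this uses that $X\to X/E$ is a geometric quotient on the open $B$-orbit and that $E$-invariant $B$-semiinvariants descend. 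I expect this last identification—pinning down $\Xi(X/E) = \Xi(X)^{\operatorname{image}\Phi}$ and hence nailing the image as the exact annihilator—to be the main obstacle, since it requires care about group schemes versus reduced groups and about the behavior of semiinvariants under the quotient $X\to X/E$.
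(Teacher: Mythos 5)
Your first and third steps are essentially the paper's argument: multiplicativity/additivity of $\Phi$ follows from uniqueness of $f_\chi$ up to scalar, and the image is identified by observing that $\Xi(X/E)$ is exactly the annihilator of $\Phi(E)$ in $\Xi(X)$ and then taking the double annihilator in the torus $\|Hom|(\Xi(X),k^*)$ (the paper phrases this as $\|Hom|(\Xi(A)/\Xi(A/E_0),k^*)=\|ker|(A\to A/E_0)=E_0$ with $A=B/B_0U$ and $E_0=\Phi(E)$; for this you do need that $\Phi(E)$ is a \emph{closed} subgroup of the torus, which your remark that $\Phi$ is a morphism of algebraic groups supplies).

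The genuine gap is in the unipotence of the kernel. First, you invoke that $\|Aut|^G(X)^\circ=\left(N_G(H)/H\right)^\circ$ is diagonalizable. That is a nontrivial theorem in characteristic zero, it is not available in the elementary form you want in characteristic $p$ (the setting of the paper), and it is uncomfortably close to being what the lemma is proving, since the lemma exhibits $E$ as unipotent-by-diagonalizable. Second, even granting that $\ker\Phi$ is finite, the step ``a finite subgroup scheme with trivial image in a torus can be nontrivial only if it is infinitesimal, i.e.\ unipotent'' is false twice over: infinitesimal does not imply unipotent ($\mu_p$ in characteristic $p$ is infinitesimal and of multiplicative type), and nothing in your argument excludes a nontrivial \emph{étale} subgroup of order prime to $p$ inside $\ker\Phi$. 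This last case is precisely the one that matters: in the paper's application $E=H/H^\circ$ is a finite constant group, and the whole point is that an element of prime-to-$p$ order acting trivially on all $B$-semiinvariants must be the identity. The paper gets this for free by a different embedding: $E$ preserves the open $B$-orbit $Bx_0$, hence sits inside $\|Aut|^B(Bx_0)=N_B(B_0)/B_0$; the composition $E\overset\sim\to Ex_0\subseteq Bx_0\to A:=B/B_0U$ is $\Phi$ on $k$-points, and its kernel $Ex_0\cap Ux_0$ lies in $\|Aut|^U(Ux_0)=N_U(B_0\cap U)/(B_0\cap U)$, a subquotient of the unipotent group $U$, hence unipotent. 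You would need to replace your kernel argument by something of this kind (or by a genuine proof that $\ker\Phi$ has no multiplicative and no prime-to-$p$ étale part).
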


\begin{proof}
  Observe that $\frac{{}^\phi f_\chi}{f_\chi}$ is a non-zero
  $B$-invariant function on $X$, hence can be considered as an element
  of $k^*$. For a similar reason, $\Phi(\phi)(\chi)$ is additive in
  $\chi$ and multiplicative in $\phi$. So $\Phi$ is a well-defined
  homomorphism which is independent of the choice of the $f_\chi$.

  Now let $Bx_0\subseteq X$ be the open $B$-orbit and $B_0=B_{x_0}$
  the stabilizer of $x_0$ (a subgroup scheme of $B$). By uniqueness,
  the open $B$-orbit is fixed by $E$. Hence we can consider $E$ as a
  subgroup of $\|Aut|^BBx_0=N_B(B_0)/B_0$. Then $A:=B/B_0U$ is a torus
  with $\Xi(A)=\Xi(X)$, hence $\|Hom|(\Xi(X),k^*)=A(k)$. Now consider
  the composition $E\overset\sim\to Ex_0\subseteq Bx_0\to A$. Then on
  a level of $k$-points this homomorphism is exactly $\Phi$. The
  kernel $Ex_0\cap Ux_0$ is unipotent since it is contained in
  $\|Aut|^U(Ux_0)=N_U(B_0\cap U)/(B_0\cap U)$. Finally, let
  $E_0:=\Phi(E)\subseteq A$. Since $\Xi(X/E)=\Xi(A/E_0)$
  \[
    \begin{split}
    \|Hom|(\Xi(X)/\Xi(X/E),k^*)=&\|Hom|(\Xi(A)/\Xi(A/E_0),k^*)=\\
      =&\|ker|(\|Hom|(\Xi(A),k^*)\to\|Hom|(\Xi(A/E_0),k^*))=\\
      =&\|ker|(A\to A/E_0)=E_0.\hspace{150pt}\qedhere
    \end{split}
  \]
\end{proof}

\begin{remark}
  In the proof, we just used the existence of an open
  $B$-orbit. Considering also the fact that the $B$-action is coming
  from a $G$-action one can show (see, e.g., \cite{KnopLV}*{Thm.\
    6.1}) that the kernel of $\Phi$ is finite,
  hence a $p$-group.
\end{remark}

We are coming to the answer of Leslie's question. In characteristic
zero, it is due to Hofscheier \cite{Hofscheier}*{Cor.~1.5}.

\begin{theorem}\label{cor:H/Hcirc}
  Let $X=G/H$ be spherical. Then there is a
  canonical surjective homomorphism
  \[
    \pi_0(H)=H/H^\circ\to\|Hom|(\,\Xi^\circ_G(X)/\Xi(X),k^*)
  \]
  whose kernel is a $p$-group. In particular, $\pi_0(H)$ is a
  $p$-group if and only if $\Xi^\circ_G(X)/\Xi(X)$ is one.
\end{theorem}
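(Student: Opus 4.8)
The idea is to realize $\pi_0(H)$ as the automorphism group of the cover $X^\circ = G/H^\circ \to X = G/H$ and then apply Lemma~\ref{lemma:autom}. First I would note that $X^\circ\to X$ is a Galois cover with Galois group $\|Aut|^G(X^\circ)$, and that this automorphism group is canonically $N_G(H)/H^\circ$ acting on the right. Since $H^\circ\trianglelefteq H$, the subgroup $H/H^\circ = \pi_0(H)$ sits inside $N_G(H)/H^\circ$ and acts on $X^\circ$ by $G$-automorphisms; more precisely, $H/H^\circ$ is exactly the subgroup of $\|Aut|^G(X^\circ)$ whose quotient recovers $X$, i.e. $X^\circ/(H/H^\circ) = X$. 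So we may take $E := H/H^\circ$ — a \emph{finite} group — as the group of $G$-automorphisms of $X^\circ$ in Lemma~\ref{lemma:autom}.

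Next, apply Lemma~\ref{lemma:autom} with $X^\circ$ in place of $X$ and $E = H/H^\circ$. It gives a homomorphism
\[
  \Phi: \pi_0(H) \to \|Hom|\bigl(\Xi(X^\circ)/\Xi(X^\circ/E),\,k^*\bigr) = \|Hom|\bigl(\Xi(X^\circ)/\Xi(X),\,k^*\bigr),
\]
which is \emph{surjective} (that's the image computation in the lemma) and whose kernel is a unipotent group. But $\pi_0(H)$ is finite, so a unipotent subgroup of it is a finite unipotent group, hence a $p$-group. That already yields a surjection $\pi_0(H)\twoheadrightarrow \|Hom|(\Xi(X^\circ)/\Xi(X),k^*)$ with $p$-group kernel.

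Finally I would replace $\Xi(X^\circ)$ by $\Xi^\circ_G(X)$. By Corollary~\ref{cor:conChar0-p}, $\Xi(X^\circ)\subseteq \Xi^\circ_G(X)$ with the index a power of $p$. Applying the exact functor $\|Hom|(\text{--},k^*)$, which kills exactly the $p$-torsion, to the exact sequence
\[
  0 \to \Xi(X^\circ)/\Xi(X) \to \Xi^\circ_G(X)/\Xi(X) \to \Xi^\circ_G(X)/\Xi(X^\circ) \to 0,
\]
the last term is a finite $p$-group, so $\|Hom|(\Xi^\circ_G(X)/\Xi(X),k^*) \to \|Hom|(\Xi(X^\circ)/\Xi(X),k^*)$ is an isomorphism. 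Composing with $\Phi$ gives the desired canonical surjection $\pi_0(H)\to \|Hom|(\Xi^\circ_G(X)/\Xi(X),k^*)$ with $p$-group kernel. For the ``in particular'': if the target is a $p$-group then so is $\pi_0(H)$ (extension of $p$-groups); conversely, since $\|Hom|(\text{--},k^*)$ kills $p$-torsion and $\Xi^\circ_G(X)/\Xi(X)$ is finite, if $\pi_0(H)$ — hence its quotient the target group — is a $p$-group, the target must in fact be trivial... more carefully: the target $\|Hom|(\Xi^\circ_G(X)/\Xi(X),k^*)$ is isomorphic to the $p'$-part of $\Xi^\circ_G(X)/\Xi(X)$, which is a $p$-group iff it is trivial iff $\Xi^\circ_G(X)/\Xi(X)$ is a $p$-group; and $\pi_0(H)$ is a $p$-group iff its kernel (a $p$-group) and the target (the $p'$-part) are both $p$-groups iff the target is trivial.

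The only genuine point requiring care — the ``main obstacle'' — is the first step: verifying cleanly that $X^\circ\to X$ is Galois with group $\|Aut|^G(X^\circ) = N_G(H)/H^\circ$ and that $H/H^\circ$ is precisely the subgroup realizing $X$ as a quotient, so that Lemma~\ref{lemma:autom} applies with $E$ literally equal to $\pi_0(H)$. This is where one must be careful about group schemes versus smooth groups: $H^\circ$ is the connected component of the group \emph{scheme} $H$, and one should check that $G/H^\circ \to G/H$ is finite étale (its degree being $|\pi_0(H)|$, prime-to-$p$ or not — but étale regardless, since $H/H^\circ$ is a smooth finite group), so that it is indeed a cover in the sense of the paper and the preceding machinery applies verbatim. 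Everything after that is the formal bookkeeping above.
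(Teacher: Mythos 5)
Your proposal is correct and follows essentially the same route as the paper: apply Lemma~\ref{lemma:autom} to $(X^\circ, E)=(G/H^\circ,\,H/H^\circ)$ to get a surjection onto $\|Hom|(\Xi(X^\circ)/\Xi(X),k^*)$ with finite unipotent (hence $p$-group) kernel, then use Corollary~\ref{cor:conChar0-p} and the exactness of $\|Hom|(\text{--},k^*)$ to replace $\Xi(X^\circ)$ by $\Xi^\circ_G(X)$. The only quibble is the identification $\|Aut|^G(X^\circ)=N_G(H)/H^\circ$, which should read $N_G(H^\circ)/H^\circ$; since $H\subseteq N_G(H^\circ)$, this does not affect the argument.
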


\begin{proof}
  Let $X^\circ=G/H^\circ$. Then an application of \cref{lemma:autom} to
    $(X,E)=(X^\circ,H/H^\circ)$ yields a surjective homomorphism
  \[
    H/H^\circ\to\|Hom|(\,\Xi(X^\circ)/\Xi(X),k^*)
  \]
  whose kernel is a $p$-group. Since $k^*$ is divisible and has no
  $p$-torsion, the assertion follows now from \cref{cor:conChar0-p}.
\end{proof}

\begin{remarks}
  \emph{a)} Let $X=G_0$ be a semisimple group considered as a
  homogeneous space for $G=G_0\times G_0$ and let
  $H=(Z(G_0)\times\{e\})\Delta(G_0)$. Then $H^\circ=\Delta(G_0)$ and
  therefore $\pi_0(H)=Z(G_0)$. Thus, $\pi_0(H)$ can be an arbitrary
  finite abelian $p'$-group.

  \emph{b)} More generally, assume $H^\circ$ is normalized by a torus $A_0$
  with $E_0:=A_0\cap H^\circ$ finite. Then for any finite subgroup
  $E\subseteq A/E_0$ one has $\pi_0(EH^\circ)=E$. An example would be
  $H^\circ=U$ and $A=T$.

  \emph{c)} In characteristic zero, one can show that these are the
  only effects which make $\pi_0$ big, namely it follows from Losev
  \cite{Losev} that if $G$ is of adjoint type and $H^\circ$ is of
  finite index in its normalizer then $\pi_0(H)$ divides
  $2^{\|rk|G}$.

  \emph{d)} The simplest example for $[H:H^\circ]$ being divisible by
  $p>1$ is $G=SL(2)$ and $H$ the normalizer of the maximal torus
  $T\cong\G_m$. Then $G/H$ is spherical with $[H:H^\circ]=2$ for any
  $p$. Thus for $p=2$, the index $[H:H^\circ]$ is indeed divisible by
  $p$.

\end{remarks}

Finally, if one replaces $G$ by a covering group $\Gq$ and takes the
limit, one obtains:

\begin{theorem}
  Let $X=G/H$ be spherical. Then there is an isomorphism
  \[
    \pi_1(X)_{p'}\overset\sim\to\|Hom|(\Xi^\circ(X)/\Xi(X),k^*).
  \]
\end{theorem}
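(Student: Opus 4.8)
The plan is to compute $\pi_1(X)_{p'}$ as an inverse limit over the $p'$-isogenies $\qq\colon\Gq\to G$, feeding each of them into \cref{cor:H/Hcirc}. Recall first that, over an algebraically closed field, $\pi_1(X)_{p'}=\varprojlim\,\|Gal|(\Xq|X)$, the limit running over isomorphism classes of connected pointed Galois covers $\Xq\to X$ whose Galois group is a $p'$-group. Given such a cover, \cref{lemma:homocover} equips it with a $p'$-isogeny $\qq\colon\Gq\to G$ and a subgroup scheme $\Hq\subseteq\Gq$ with $\Xq=\Gq/\Hq$ and $\Hq^1\subseteq\Hq\subseteq\Hq^2$, where $\Hq^2:=\qq^{-1}(H)$ and $\Hq^1:=(\Hq^2)^\circ$. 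Since $\ker\qq$ is central and contained in $\Hq^2$, the $\Gq$-action on $X=\Gq/\Hq^2$ factors through $G$, so colors, their valuations and $\Xi(X)$ are unchanged on passing from $G$ to $\Gq$; thus \cref{cor:H/Hcirc} applies to the spherical pair $(\Gq,\Hq^2)$ and yields a canonical surjection $\pi_0(\Hq^2)\twoheadrightarrow\|Hom|(\Xi^\circ_{\Gq}(X)/\Xi(X),k^*)$ with $p$-group kernel $K_{\Gq}$, where $\Xi^\circ_{\Gq}(X):=\Xi^\circ(X)\cap\Xi(\Gq)$. Because $K_{\Gq}$ is a normal $p$-subgroup with $p'$-quotient, it is the unique Sylow $p$-subgroup of $\pi_0(\Hq^2)$; hence $\pi_0(\Hq^2)$ has abelian maximal $p'$-quotient $\|Hom|(\Xi^\circ_{\Gq}(X)/\Xi(X),k^*)$, and $K_{\Gq}$ is contained in every subgroup of $p'$-index, in particular in $\Hq/\Hq^1$ (recall $[\Hq^2:\Hq]=|\|Gal|(\Xq|X)|$ is prime to $p$). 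Therefore the intermediate cover $\Xq_{\Gq}:=\Gq/\Hq^\flat$, where $\Hq^\flat/\Hq^1=K_{\Gq}$, is a Galois $p'$-cover of $X$ with group $\|Hom|(\Xi^\circ_{\Gq}(X)/\Xi(X),k^*)$ which dominates $\Xq$.

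Consequently the family $\{\Xq_{\Gq}\to X\}$, indexed by the cofiltered system of $p'$-isogenies $\Gq\to G$, is cofinal among connected Galois $p'$-covers of $X$, and by the naturality of \cref{cor:H/Hcirc} its transition maps correspond, under these identifications, to restriction along the inclusions $\Xi^\circ_{\Gq}(X)\subseteq\Xi^\circ_{\Gq'}(X)$. Passing to the limit gives
\[
  \pi_1(X)_{p'}=\varprojlim_{\Gq}\|Hom|\bigl(\Xi^\circ_{\Gq}(X)/\Xi(X),\,k^*\bigr)
  =\|Hom|\bigl(\textstyle\bigcup_{\Gq}\Xi^\circ_{\Gq}(X)\,/\,\Xi(X),\ k^*\bigr).
\]
Now, by the description of $\Xi^\circ(G)$ recalled above, $\bigcup_{\Gq}\Xi(\Gq)$ (union over $p'$-isogenies) is the preimage in $\Xi^\circ(G)$ of the prime-to-$p$ torsion of $\Xi^\circ(G)/\Xi(G)$; call it $\Xi(G)^\flat$. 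Then $\bigcup_{\Gq}\Xi^\circ_{\Gq}(X)=\Xi^\circ(X)\cap\Xi(G)^\flat$, and by \cref{lemma:LunaKnop} we have $\Xi^\circ(X)\subseteq\Xi^\circ(G)$, so $\Xi^\circ(X)/(\Xi^\circ(X)\cap\Xi(G)^\flat)$ embeds into the $p$-group $\Xi^\circ(G)/\Xi(G)^\flat$. Since $\|Hom|(\,\text{--}\,,k^*)$ is exact and annihilates $p$-torsion, the inclusion $\Xi^\circ(X)\cap\Xi(G)^\flat\hookrightarrow\Xi^\circ(X)$ induces an isomorphism
\[
  \|Hom|\bigl(\Xi^\circ(X)/\Xi(X),\,k^*\bigr)\ \overset\sim\to\ \|Hom|\bigl((\Xi^\circ(X)\cap\Xi(G)^\flat)/\Xi(X),\,k^*\bigr)=\pi_1(X)_{p'},
\]
which is the assertion.

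I expect the real work to be the cofinality claim. It rests on \cref{lemma:homocover} to present every $p'$-cover as some $\Gq/\Hq$, and — more delicately — on the structural fact, extracted from \cref{cor:H/Hcirc}, that $\pi_0(\Hq^2)$ has a normal Sylow $p$-subgroup with abelian $p'$-complement; this is exactly what makes its maximal $p'$-quotient abelian and forces every $p'$-subcover of $\Gq/\Hq^1$ to factor through $\Xq_{\Gq}$. One also has to check that the Galois-group identifications are compatible with the transition maps, i.e.\ the naturality of \cref{cor:H/Hcirc} in $\Gq$, and that the indexing system is genuinely cofiltered. A more concrete route to cofinality is available by restricting every cover to the dense open $B$-orbit $Bx_0$: as a variety this is $A\times\AA^m$ with $A$ the torus satisfying $\Xi(A)=\Xi(X)$, so $p'$-covers of $Bx_0$ are classified by subgroups $\Gamma$ with $\Xi(X)\subseteq\Gamma$ of finite $p'$-index; such a cover extends over $X$ exactly when it is unramified along every color, i.e.\ when $\Gamma\subseteq\Xi^\circ(X)$, in which case \cref{lemma:CoverExist} (together with \cref{lemma:Xi0isogeny}) provides the extension — giving the same bijection, and hence the same limit.
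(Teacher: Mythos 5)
Your proposal is correct and follows essentially the same route as the paper: compute $\pi_1(X)_{p'}$ as an inverse limit over isogenies $\Gq\to G$ via \cref{lemma:homocover}, apply \cref{cor:H/Hcirc} to each $(\Gq,\qq^{-1}(H))$, and then use \cref{lemma:LunaKnop} together with the injectivity and $p$-torsion-freeness of $\|Hom|(\text{--},k^*)$ to replace $\bigcup_\Gq\Xi^\circ_\Gq(X)$ by $\Xi^\circ(X)$. The only difference is presentational: you make explicit the cofinality and normal-Sylow arguments that the paper absorbs into its definition of $\pi_1(X;\Gq)$ and the identity $\pi_1(X;\Gq)_{p'}=(\Hq/\Hq^\circ)_{p'}$.
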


  \begin{proof}
    For a fixed cover $\Gq\to G$ let $\pi_1(X;\Gq)$ be the quotient of
    $\pi_1(X)$ which classifies all covers $\Xq\to X$ such that the
    $G$-action on $X$ lifts to a $\Gq$-action on $\Xq$. Let
    $\Hq\subseteq\Gq$ be the preimage of $H$ such that
    $X=\Gq/\Hq$. According to \cref{cor:H/Hcirc} we have
  \[
    \pi_1(X;\Gq)_{p'}=\pi_1(\Gq/\Hq;\Gq)_{p'}
    =(\Hq/\Hq^\circ)_{p'}=
    \|Hom|(\,\Xi^\circ_\Gq(X)/\Xi(X),k^*).
  \]
  On the other hand,
  \cref{lemma:homocover} implies
  \[\label{eq:pi1XG}
    \pi_1(X)_{p'}=\varprojlim_\Gq\pi_1(X;\Gq)_{p'}=
    \varprojlim_\Gq\|Hom|(\,\Xi^\circ_\Gq(X)/\Xi(X),k^*).
  \]
  Let $\Xi^{\circ\circ}(G)\subseteq\Xi_\QQ(G)$ be the union of all
  weight lattices $\Xi(\Gq)$ where $\Gq$ runs over all
  \emph{separable} isogenies $\Gq\to G$. Let moreover
  $\Xi^{\circ\circ}(X):=\Xi^\circ(X)\cap\Xi^{\circ\circ}(G)=\bigcup_{\Gq}\Xi_\Gq^\circ(X)$.
  Since $k^*$ is an injective $\ZZ$-module, \eqref{eq:pi1XG} implies
  \[
    \pi_1(X)_{p'}=\|Hom|(\Xi^{\circ\circ}(X)/\Xi(X),k^*).
  \]
  Recall that the group $\Xi^\circ(G)$ defined in \eqref{eq:XiG} is
  the union of all $\Xi(\Gq)$ where $\Gq$ runs over all, possibly
  inseparable, isogenies $\Gq\to G$. Since every such isogeny is the
  composition of a purely inseparable isogeny followed by a separable
  one, we see that the quotient $\Xi^\circ(G)/\Xi^{\circ\circ}(G)$ is
  a $p$-group.  It follows from \cref{lemma:LunaKnop} that
  \[
    \frac{\Xi^\circ(X)}{\Xi^{\circ\circ}(X)}
    \subseteq\frac{\Xi^\circ(G)}{\Xi^{\circ\circ}(G)}.
  \]
  Since $k^*$ is injective without $p$-torsion, this implies
  \[
    \pi_1(X)_{p'}=\|Hom|(\Xi^{\circ\circ}(X)/\Xi(X),k^*)=
    \|Hom|(\Xi^\circ(X)/\Xi(X),k^*).\qedhere\hfill\qed
  \]
\end{proof}

\begin{remark}
  There is another approach to compute the fundamental group of a
  homogeneous space $X=G/H$ due to Demarche \cite{Demarche} which
  presupposes the knowledge of $H$. The point of the present note is to
  use just (part of) the spherical system of $X$.
\end{remark}
  
  \begin{bibdiv}
    \begin{biblist}

\bib{Altman}{article}{
   author={Altman, Allen},
   author={Kleiman, Steven L.},
   title={On the purity of the branch locus},
   journal={Compositio Math.},
   volume={23},
   date={1971},
   pages={461--465},
   issn={0010-437X},
   note={Correction in Compositio
   Math. {\bf 26} (1973), 175--180)},
   review={\MR{308118}},
 }
 
\bib{BrionSzamuely}{article}{
   author={Brion, Michel},
   author={Szamuely, Tam\'{a}s},
   title={Prime-to-$p$ \'{e}tale covers of algebraic groups and homogeneous
   spaces},
   journal={Bull. Lond. Math. Soc.},
   volume={45},
   date={2013},
   number={3},
   pages={602--612},
   issn={0024-6093},
   review={\MR{3065029}},
   doi={10.1112/blms/bds110},
   arxiv={1109.2802},
 }
 
\bib{Demarche}{article}{
   author={Demarche, Cyril},
   title={Le groupe fondamental \'{e}tale d'un espace homog\`ene d'un groupe
   alg\'{e}brique lin\'{e}aire},
   language={French, with English and French summaries},
   journal={Math. Ann.},
   volume={368},
   date={2017},
   number={1-2},
   pages={339--365},
   issn={0025-5831},
   review={\MR{3651576}},
   doi={10.1007/s00208-016-1465-3},
   arxiv={1301.1046},
 }

 \bib{Hofscheier}{article}{
   author={Hofscheier, Johannes},
   title={Containment relations among spherical subgroups},
   date={2018},
   pages={15 pp.},
   journal={Preprint},
   arxiv={1804.00378},
 }

\bib{Leslie}{misc}{
    title={Connected components of a spherical subgroup from spherical data?},  
    author={Leslie, Spencer},    
    date={2023-10-27},
    note={\href{https://mathoverflow.net/q/457274}{mathoverflow.net/q/457274}},
    eprint={https://mathoverflow.net/q/457274},    
    organization={MathOverflow}  
  }

  \bib{KnopLV}{article}{
   author={Knop, Friedrich},
   title={The Luna-Vust theory of spherical embeddings},
   conference={
      title={Proceedings of the Hyderabad Conference on Algebraic Groups},
      address={Hyderabad},
      date={1989},
   },
   book={
      publisher={Manoj Prakashan, Madras},
   },
   date={1991},
   pages={225--249},
   review={\MR{1131314}},
 }
 
  \bib{KnopLocalization}{article}{
   author={Knop, Friedrich},
   title={Localization of spherical varieties},
   journal={Algebra Number Theory},
   volume={8},
   date={2014},
   number={3},
   pages={703--728},
   issn={1937-0652},
   review={\MR{3218807}},
   doi={10.2140/ant.2014.8.703},
   arxiv={1303.2561},
 }
 
\bib{Losev}{article}{
   author={Losev, Ivan V.},
   title={Uniqueness property for spherical homogeneous spaces},
   journal={Duke Math. J.},
   volume={147},
   date={2009},
   number={2},
   pages={315--343},
   issn={0012-7094},
   review={\MR{2495078}},
   doi={10.1215/00127094-2009-013},
 }

 \bib{LunaGC}{article}{
   author={Luna, Domingo},
   title={Grosses cellules pour les variétés sphériques},
   language={French},
   conference={
      title={Algebraic groups and Lie groups},
   },
   book={
      series={Austral. Math. Soc. Lect. Ser.},
      volume={9},
      publisher={Cambridge Univ. Press, Cambridge},
   },
   date={1997},
   pages={267--280},
   review={\MR{1635686}},
 }
 
\bib{Miyanishi}{article}{
   author={Miyanishi, Masayoshi},
   title={On the algebraic fundamental group of an algebraic group},
   journal={J. Math. Kyoto Univ.},
   volume={12},
   date={1972},
   pages={351--367},
   issn={0023-608X},
   review={\MR{301023}},
   doi={10.1215/kjm/1250523524},
 }

 \bib{Orogonzo}{article}{
   author={Orgogozo, Fabrice},
   title={Alt\'{e}rations et groupe fondamental premier \`a $p$},
   language={French, with English and French summaries},
   journal={Bull. Soc. Math. France},
   volume={131},
   date={2003},
   number={1},
   pages={123--147},
   issn={0037-9484},
   review={\MR{1975807}},
   doi={10.24033/bsmf.2438},
}
  \end{biblist}
\end{bibdiv}

\end{document}